\documentclass[11pt, amsfonts]{amsart}


\usepackage{amsmath,amssymb,amsthm}
\usepackage[all]{xy}
\usepackage[dvipdfm,colorlinks=true]{hyperref}
\usepackage{mathtools}


\textwidth 6in
\oddsidemargin .25in
\evensidemargin .25in
\parskip .05in
\parindent .0in

\numberwithin{equation}{section}



\newtheorem{thm}{Theorem}[section]
\newtheorem{prop}[thm]{Proposition}
\newtheorem{lem}[thm]{Lemma}

\theoremstyle{definition}

\theoremstyle{rem}


\newcommand{\G}{\mathcal{G} }
\newcommand{\Z}{\mathbb{Z}}
\newcommand{\Q}{\mathbb{Q}}

\newcommand{\map}{\mathrm{Map}}
\newcommand{\ch}{\mathrm{ch}}
\newcommand{\fsus}{\hat{\sigma}^2}
\newcommand{\F}{\mathbb{F}}


\title[Cohomology of the classifying spaces of $U(n)$-gauge groups over $S^2$]{Cohomology of the classifying spaces of $U(n)$-gauge groups over $S^2$}

\author{Masahiro Takeda}
\address{Department of Mathematics, Kyoto University, Kyoto, 606-8502, Japan}
\email{m.takeda@math.kyoto-u.ac.jp}


\begin{document}

\baselineskip.525cm

\maketitle

\begin{abstract}

A gauge group is the topological group of automorphisms of a principal bundle. We compute the integral cohomology ring of the classifying spaces of gauge groups of principal $U(n)$-bundles over the 2-sphere by generalizing the operation for free loop spaces, called the free double suspension.

\end{abstract}


\section{Introduction}

Let $G$ be a topological group, and $P \rightarrow X$ be a principal $G$-bundle over a base $X$.  An automorphism 
of $P$ is, by definition, a $G$-equivariant self-map of $P$ covering the identity map of $X$. The gauge group of $P$, denoted by $\G(P)$, is defined by the topological group of
automorphisms of $P$. 

As in \cite{AB,G}, there is a natural equivalence 
\begin{equation}\label{AB}
B\G(P) \simeq \map (X, BG; \alpha ), \tag{$\star$}
\end{equation}
where $\map(X,Y;f)$ is the path-component of the space of maps $\map(X,Y)$ containing a map $f \colon X \rightarrow Y$ and $\alpha \colon X \rightarrow BG$ is a classifying map of $P$. This connection enables us to employ new techniques and insights from gauge group specifically fiberwise homotopy theory and group theory, to study the homotopy theory of mapping spaces and to import rich tools in the homotopy theory of mapping space to gauge groups. Moreover since the classifying space $B\G(P)$ is homotopy equivalent to the moduli space of connections of $P$ in smooth case, the homotopy theory of gauge groups has potentially application in geometry and physics.

In this paper, we determine the integral cohomology ring of the classifying spaces of gauge groups of principal $U(n)$-bundle over $S^2$. 
Although the (co)homology of the classifying spaces is an obviously important object in topology and has possible applications to geometry and physics, there are only a few previous works. 
Mod-$p$ homology is computed in \cite{C,KT}, and a partial cohomology is done in \cite{K,M,T}. In \cite{AB} the rational Poincar\'e series is determined, and in fact the rational cohomology is trivially determined because the rationalization of $BG$ is a product of Eilenberg-Mac Lane spaces.

Thus our result is the first complete determination of the integral cohomology ring of the classifying space of gauge groups in the nontrivial case.

To state the main theorem we set notation. Let $e\colon \G(P)\rightarrow G$ be a homomorphism given by the evaluation map at the base point of $X$. Then one gets the induced map $B\G(P)\rightarrow BG$ which is denoted by the same symbol $e$. Let $P_{n,k}$ be a principal $U(n)$-bundle over $S^2$ such that $c_1(P_{k,n})=k\in \Z \cong H^2(S^2;\Z)$. Recall that $H^*(BU(n))\cong \Z[c_1,c_2 \dots c_n]$, when $c_i$ is the $i$-th Chern class of the universal bundle. As we will see later, $e \colon B\G(P)\rightarrow BG$ is an injection in cohomology, and so we abbreviate $e^*(c_i)$ by $c_i$. Let $e_i(a_1, a_2, \dots) \in \Z[[a_1, a_2 \dots ]]$ be the $i$-th elementary symmetric function in $a_1, a_2, \dots$, and $s_i$ be the $i$-th Newton polynomial defined by 
$$s_i(e_1,\dots e_i)= \sum_{j}a_j^i,$$ 
where we abbreviate $e_i(a_1,a_2 \dots)$ and $s_i(x_1, \dots x_i)$ by $e_i$ and $s_i$ respectively when the indeterminates are obvious.

Now we state the main theorem.

\begin{thm}\label{main}
There is an isomorphism 
$$H^*(B\G(P_{n,k});\Z) \cong \Z[c_1, \dots c_n, x_1,x_2, \dots]/(h_n, h_{n+1},\dots)$$
such that there is $\zeta \in K(B\G(P_{n,k}))$ with $c_i(\zeta) = x_i$, where $$h_i= kc_i + \sum_{1 \leq j \leq i} (-1)^{j}s_{j}(x_1, x_2 \dots x_{j})c_{i-j}.$$
\end{thm}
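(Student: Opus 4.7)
I would use the Atiyah--Bott equivalence $B\G(P_{n,k})\simeq\map(S^2,BU(n);\alpha)$ from \eqref{AB} together with the evaluation fibration
\[ \Omega SU(n)\;\simeq\;\Omega^2_0 BU(n)\;\longrightarrow\; B\G(P_{n,k})\xrightarrow{\ e\ } BU(n). \]
The classes $x_i$ will arise as Chern classes of a K-theory class $\zeta$ built from a ``free double suspension'' construction, the relations $h_i$ will come from a single Chern class calculation on $B\G(P_{n,k})\times S^2$, and completeness will be obtained by comparing with the stabilized mapping space.

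\textbf{The class $\zeta$ and the relations.} Let $\mathrm{ev}\colon B\G(P_{n,k})\times S^2\to BU(n)$ be the adjoint of the identity, and let $\tilde P$ denote the pullback of the universal $U(n)$-bundle along $\mathrm{ev}$. Bott periodicity yields $K(Y\times S^2)=K(Y)\oplus K(Y)\cdot(H-1)$, so one can write uniquely
\[ \tilde P\;=\;\pi^*(e^*(\mathrm{univ}))\;+\;(H-1)\cdot\pi^*\zeta\qquad\text{in } K(B\G(P_{n,k})\times S^2), \]
where $\pi$ is the projection to $B\G(P_{n,k})$; restricting to $\{\mathrm{pt}\}\times S^2$ forces $\mathrm{rk}\,\zeta=k$. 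I take this $\zeta$ to be the free double suspension $\fsus$ of the universal bundle and define $x_i:=c_i(\zeta)$. The splitting principle together with $\sigma^2=0$ (for $\sigma=c_1(H)\in H^2(S^2)$) gives
\[ c_t(\tilde P)\;=\;C(t)\cdot\frac{c_t(H\otimes\zeta)}{c_t(\zeta)}\;=\;C(t)\!\left(1+\sigma\!\left(kt+\sum_{m\ge 1}(-1)^m s_m(x_1,\ldots,x_m)\,t^{m+1}\right)\right), \]
where $C(t)=\sum_i c_i t^i$. Since $\tilde P$ is pulled back from $BU(n)$, $c_t(\tilde P)$ has $t$-degree at most $n$, and extracting the coefficient of $\sigma\cdot t^{j+1}$ reproduces exactly $h_j$; the vanishing of these coefficients for $j\ge n$ gives the stated relations.

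\textbf{Completeness via stabilization.} To show the induced map
\[ \Z[c_1,\ldots,c_n,x_1,x_2,\ldots]/(h_n,h_{n+1},\ldots)\;\longrightarrow\; H^*(B\G(P_{n,k});\Z) \]
is an isomorphism, I compare with the stabilized gauge group. The inclusion $U(n)\hookrightarrow U$ induces $B\G(P_{n,k})\to B\G(\iota_* P_{n,k})\simeq\map(S^2,BU;\iota\alpha)\simeq BU\times BU$, where the last splitting uses that $BU$ is an infinite loop space; under this map the Chern classes of the two $BU$ factors restrict to $c_i$ (for $i\le n$) and $x_i$. Using the induced map of Serre spectral sequences of the evaluation fibrations for $B\G(P_{n,k})\to BU(n)$ and $BU\times BU\to BU$, together with the classical description
\[ H^*(\Omega SU(n);\Z)\;\cong\;\Z[x_1,x_2,\ldots]/(s_n,s_{n+1},\ldots), \]
the fact that $\zeta$ globally lifts all fiber generators forces the source spectral sequence to collapse at $E_2$, and a degreewise matching with the trivially-collapsing stable spectral sequence identifies the two presentations.

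\textbf{Main obstacle.} The delicate step is the last one: ruling out hidden differentials and multiplicative extensions in the Serre spectral sequence for $\Omega SU(n)\to B\G(P_{n,k})\to BU(n)$, and ensuring that the integral (as opposed to rational) structure of $H^*(\Omega SU(n);\Z)$ lifts cleanly to the $h_i$ relations. The global class $\zeta$ is what makes the $x_i$ permanent cycles, while the stabilization comparison with the freely generated target $BU\times BU$ is what excludes any additional relations beyond $h_n,h_{n+1},\ldots$.
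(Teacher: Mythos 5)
Your first two steps are sound and in fact land on the same construction the paper uses: the class $\zeta$ you extract from the K\"unneth/Bott decomposition of $K(B\G(P_{n,k})\times S^2)$ is precisely the paper's K-theoretic free double suspension $\fsus_k(\xi_n)$, and your derivation of the relations from the vanishing of $c_{j+1}(\tilde P)$ for $j\ge n$ via the splitting principle (using $\sigma^2=0$) is a correct, and arguably more direct, repackaging of the paper's computation of $\fsus_k(c_{i+1})$ through Proposition \ref{basic}(3), Lemma \ref{shift} and Lemma \ref{sigma}; the coefficient of $\sigma t^{j+1}$ does come out to exactly $h_j$. One step you assert without proof is that the $x_i$ restrict to generators of $H^*(\Omega^2_kBU(n))\cong H^*(\Omega SU(n))$, which is what licenses Leray--Hirsch; the paper needs the Bott map $\beta$ and the naturality of Bott's isomorphism under the inclusions $SU(n)\to SU(n+1)$ (Proposition \ref{Bott}(2)) for this, and your phrase ``$\zeta$ globally lifts all fiber generators'' is the conclusion of that argument, not a substitute for it.

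The genuine gap is in the completeness step. Comparing with the stabilized mapping space $\map(S^2,BU;k)\simeq BU\times BU$ gives a surjection from a free polynomial ring onto $H^*(B\G(P_{n,k}))$, and a surjection from a free object cannot ``exclude additional relations'': it bounds nothing from below about the target and nothing from above about the kernel. What is actually needed is an upper bound on the size of the abstract quotient $A=\Z[c_1,\dots,c_n,x_1,x_2,\dots]/(h_n,h_{n+1},\dots)$ in each degree, to be matched against the size of $H^*(\map(S^2,BU(n);k))$ known from the collapsing Serre spectral sequence. The paper supplies this by pure algebra: over any field $\F$ one has $h_i\equiv(-1)^is_i(x_1,\dots,x_i)$ modulo $(c_1,\dots,c_n)$, and $c_1,\dots,c_n$ is a regular sequence in $A_\F$, whence $P_t(A_\F)=P_t\bigl(\F[x_1,x_2,\dots]/(s_n,s_{n+1},\dots)\bigr)\big/\prod_{i=1}^{n}(1-t^{2i})$, which by Bott's theorem equals the Poincar\'e series of $E_2=E_\infty$; a surjection of finite-type graded groups with equal Poincar\'e series over every field is then an isomorphism over $\Z$. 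Your ``degreewise matching'' needs exactly this regular-sequence/Poincar\'e-series argument, and it also needs the field-by-field reduction (or a proof that $A$ is degreewise free abelian) to conclude integrally rather than rationally. Without it, nothing rules out the ideal $(h_n,h_{n+1},\dots)$ collapsing $A$ below the known size of $H^*(B\G(P_{n,k}))$ in some degree, which would contradict surjectivity only after one has done this count.
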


To prove this theorem we will generalize a certain map in the cohomology of free loop spaces which is defined in \cite{KK} and called the free loop suspension. Since the homotopy equivalence (\refeq{AB}) is natural with respects to $X$, the map $e\colon B\G(P) \rightarrow BG$ coincides with the evaluation map $\map (X, BG; \alpha )\rightarrow BG$ at the base point of $X$  which is ambiguously denoted by the same symbol $e$. Specifically in the case of $P_{n,k}$ there is a evaluation fibration 
$$\Omega^2_kBU(n) \rightarrow \map(S^2, BU(n);k) \xrightarrow{e} BU(n),$$ 
where $\Omega^2_kBU(n)$ and $\map(S^2, BU(n);k)$ are the connected component of the double loop space of $BU(n)$ and $\map(S^2,BU(n))$ containing a dgree $k$ map respectively. 
Since $\Omega^2_kBU(n) \simeq \Omega_0 U(n)$, $\Omega^2_kBU(n)$ and $BU(n)$ have only even cells. Thus associated Serre spectral sequence of this evaluation fibration collapses at the $E_2$-term, hence there is an isomorphism as $H^*(BU(n))$-modules
$$H^*(\map(S^2, BU(n);k)) \cong H^*(\Omega^2_kBU(n)) \otimes H^*(BU(n)),$$
and so in particular $e$ is an injection in cohomology. Thus it remains to determine the ring structure by using the free double suspnsion.

\section{Free double suspension}

Let $LX \coloneqq \map(S^1,X)$, 
the free loop space of a space $X$. In \cite{KK}, a map $\hat{\sigma}\colon H^*(X)\to H^{*-1}(LX)$ is constructed as an extension of the cohomology suspension $\sigma\colon H^*(X)\to H^{*-1}(\Omega X)$ and apply it to the evaluation fibration
$$\Omega X\to LX\to X$$
to determine the cohomology of $LX$. In this section, we generalize the free suspension to a mapping space $\map(S^2,X;f)$ and show its basic properties that we are going to use. Since in \cite{KK} the space $X$ is assumed to be simply connected, the components of $LX$ is not necessary considered in this case. We have to consider the path-components of  $\map(S^2,X)$ in general free suspension.

Let $\hat{e}^2\colon S^2\times\map(S^2,X;f)\to X$ be the evaluation map defined by $\hat{e}(s,g)=g(s)$ for $(s,g)\in S^2\times\map(S^2,X)$. We define the \emph{free double suspension}
$$\hat{\sigma}^2_f\colon H^*(X)\to H^{*-2}(\map(S^2,X;f))\quad\text{by}\quad\hat{\sigma}^2_f(x)=\hat{e}^{*}(x)\slash v$$
for $x\in H^*(X)$, where $\slash$ means the slant product and $v \in H_2(S^2)$ is the Hurewicz image of the identity map of $S^2$.

To state properties of free double suspensions, we set notation. 
Let $e\colon\map(S^2,X;f)\to X$ be the evaluation map at the basepoint of $S^2$.  
Let $\sigma^2_f \colon H^*(X)\to H^{*-2}(\Omega^2_f X)$ be the composite of cohomology suspensions and the inclusion map
$$H^*(X)\xrightarrow{\sigma}H^{*-1}(\Omega X)\xrightarrow{\sigma}H^{*-2}(\Omega^2 X)\rightarrow H^{*-2}(\Omega^2 _fX),$$
where $f\in \Omega^2X$.
Let $j\colon\Omega^2_f X\to \map(S^2,X;f)$ be the inclusion.

\begin{prop}\label{basic}
  Let $f \in \Omega^2(X)$. Free double suspensions have the following properties.
  \begin{enumerate}
    \item  $\hat{\sigma}^2_f$ restricts to $\sigma^2_f$ such that
    $$j^*\circ\hat{\sigma}^2_f=\sigma^2_f.$$
    \item $\hat{\sigma}^2_f$ is a derivation such that for $x,y\in H^*(X)$
    $$\hat{\sigma}^2_f(xy)=\hat{\sigma}^2_f(x)e^*(y)+e^*(x)\fsus_f(y).$$
    \item Suppose that $X$ is a path-connected H-group with a multiplication $\mu\colon X\times X\to X$. If $\mu^*(y)=\sum_iy_i\times y_i'$ for $y\in H^*(X)$, then
    $$\hat{\sigma}^2_f(y)=\sum_i\alpha^*(\sigma^2_f(y_i))e^*(y_i'),$$
    where $\alpha\colon\map(S^2,X;f)\to\Omega^2_fX$ is given by $\alpha(g)=g\cdot e(g)^{-1}$ for $g\in\map(S^2,X;f)$.
  \end{enumerate}
\end{prop}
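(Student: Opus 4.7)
The plan is to derive all three parts from two structural inputs: the explicit form of $H_*(S^2)$---in particular, the diagonal $\Delta_*(v) = v \otimes 1 + 1 \otimes v$ (forced by degree)---together with the naturality of the slant product and its Leibniz-type formula for cup products. Throughout, I will use the basic facts that for $\alpha \in H^*(S^2 \times Y)$ the class $\alpha/1 \in H^*(Y)$ is the restriction of $\alpha$ along the basepoint inclusion $\{*\} \times Y \hookrightarrow S^2 \times Y$, while $\alpha/v$ drops degree by two.

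For part (1), the key observation is that the composite
$$S^2 \times \Omega^2_f X \xrightarrow{1 \times j} S^2 \times \map(S^2, X; f) \xrightarrow{\hat{e}} X$$
is precisely the standard evaluation $\epsilon$ whose slant product with $v$ represents the iterated cohomology suspension (i.e.\ $\sigma^2_f(x) = \epsilon^*(x)/v$). Hence by naturality of the slant product in the space variable we have $j^*\fsus_f(x) = j^*(\hat{e}^*(x)/v) = \epsilon^*(x)/v = \sigma^2_f(x)$.

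For part (2), I would apply the Leibniz formula $(\alpha\beta)/v = (\alpha/v)(\beta/1) + (\alpha/1)(\beta/v)$ --- which encodes $\Delta_*(v) = v \otimes 1 + 1 \otimes v$, and whose sign collapses to $+$ because $\deg v$ is even --- to $\hat{e}^*(xy) = \hat{e}^*(x)\hat{e}^*(y)$. Since $\hat{e}^*(z)/1 = e^*(z)$ by the general remark above, this reduces immediately to the desired derivation formula.

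For part (3), the H-group structure of $X$ provides the factorization $\hat{e} = \mu \circ (\beta, e \circ \pi_2)$, where $\beta\colon S^2 \times \map(S^2,X;f) \to X$ is defined by $\beta(s,g) = \alpha(g)(s)$ and thus factors as $\epsilon \circ (1 \times \alpha)$. Expanding $\hat{e}^*(y) = \sum_i \beta^*(y_i) \cdot \pi_2^* e^*(y_i')$ and applying the Leibniz formula once more, the cross term involving $\pi_2^* e^*(y_i')/v$ vanishes (since $\pi_2^* e^*(y_i') = 1 \times e^*(y_i')$ and $\langle 1, v\rangle = 0$), while the surviving term $(\beta^*(y_i)/v) \cdot e^*(y_i')$ simplifies by naturality and part (1) to $\alpha^*(\sigma^2_f(y_i)) \cdot e^*(y_i')$. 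The principal technical obstacle is keeping the slant-product Leibniz signs straight; in our setting all signs collapse because $v$ lies in even degree and the sole nontrivial cross term is itself in even degree. Once this is verified, each of (1)--(3) reduces to the short direct computation sketched above, resting on the factorizations $\hat{e} \circ (1 \times j) = \epsilon$ and $\hat{e} = \mu \circ (\beta, e\pi_2)$.
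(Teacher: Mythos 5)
Your proposal is correct and follows essentially the same route as the paper: part (1) via naturality of the slant product along $1\times j$, part (2) via multiplicativity of $\hat{e}^*$ over the cohomology of $S^2$, and part (3) via the factorization $\hat{e}\simeq\mu\circ(\bar{e}\times 1)\circ(1\times(\alpha\times e))$. The only difference is cosmetic: you phrase the computation through the slant-product Leibniz rule encoding $\Delta_*(v)=v\otimes 1+1\otimes v$, while the paper writes the equivalent K\"unneth decomposition $\hat{e}^*(x)=1\times e^*(x)+u\times\hat{\sigma}^2_f(x)$ and reads off the $u$-component.
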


\begin{proof}
  (1) Let $\bar{e}\colon S^2\times\Omega^2_f X\to X$ be the restriction of $\hat{e}$, that is, $\bar{e}=\hat{e}\circ(1\times j)$. Recall that $\sigma^2_f(x)=\bar{e}^*(x)\slash v$ for $x\in H^*(X)$. Then
  $$j^*\circ\hat{\sigma}^2_f(x)=j^*(\hat{e}^*(x)\slash v)=((1\times j)^*\circ\hat{e}^*(x))\slash v=\bar{e}^*(x)\slash v=\sigma^2_f(x).$$

  (2) By definition, $\hat{e}^*(x)=1\times e^*(x)+u\times\hat{\sigma}^2_f(x)$ for $x\in H^*(X)$, where $u \in H^2(S^2)$ is the Kronecker dual of $v$. Then for $x,y\in H^*(X)$,
  \begin{align*}
    \hat{e}^*(xy)&=\hat{e}^*(x)\hat{e}^*(y)=(1\times e^*(x)+u\times\hat{\sigma}^2_f(x))(1\times e^*(y)+u\times\hat{\sigma}^2_f(y))\\
    &=1\times e^*(xy)+u\times(\hat{\sigma}^2_f(x)e^*(y)+e^*(x)\hat{\sigma}^2_f(y)).
  \end{align*}
  Thus one gets the desired equality by taking the slant product with $v$.

   (3) The map $\alpha\times e\colon\map(S^2,X;f)\to\Omega^2_f X\times X$ is obviously a homotopy equivalence and satisfies a homotopy commutative diagram
  $$\xymatrix{S^2\times\map(S^2,X;f)\ar[rr]^{1\times(\alpha\times e)}\ar[d]^{\hat{e}}&&S^2\times\Omega^2_fX\times X\ar[d]^{\bar{e}\times 1}\\
  X&&X\times X\ar[ll]_\mu.}$$
  As well as $\hat{\sigma}_f$, one has $\bar{e}^*(y)=1\times j^* \circ e^*(y)+u\times\sigma^2_f(y)$ for $y\in H^*(X)$. Thus if $\mu^*(y)=\sum_iy_i\otimes y_i'$, then
  \begin{align*}
    \hat{e}^*(x)&=(1\times(\alpha \times e)^*)\circ(\bar{e}^*\times 1)\circ\mu^*(y)=(1\times\alpha^*\times e^*)\circ(\bar{e}^*\times 1)\Bigl(\sum_iy_i\times y_i'\Bigr)\\
    &=(1\times (\alpha\times e)^*)\Bigl(\sum_i(1\times j^* \circ e^*(y_i)+u\times\sigma^2_f(y_i))\times y_i'\Bigr)\\
    &=\sum_i(1\times \alpha^* \circ j^* \circ e^*(y_i)+u\times\alpha^*(\sigma^2_f(y_i))) e^*(y_i').
  \end{align*}
  Therefore the desired equality is obtained by taking the slant product with $v$.
  
\end{proof}

We observe the relation between cohomology suspensions and component shifts. Let $\bar{f} \colon \Omega^2_0X \rightarrow \Omega^2_fX$ denote the map given by adding a map $f \colon S^2 \rightarrow X$. Then $\bar{f}$ is a homotopy equivalence.

\begin{lem}\label{shift}
For a map $f\colon S^2 \rightarrow X$ and $x\in H^*(X)$,
$$\sigma^2_f(x) =(f^*(x) \times 1) /v+(\bar{f}^*)^{-1}\circ \sigma^2_0(x).$$

\end{lem}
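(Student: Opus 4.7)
The plan is to pull back the defining formula for $\sigma^2_f$ along the homotopy equivalence $1\times\bar{f}\colon S^2\times\Omega^2_0X\to S^2\times\Omega^2_fX$, and to recognize the resulting composite evaluation via the pinch map $\nabla\colon S^2\to S^2\vee S^2$, which is what induces the $H$-structure on $\Omega^2X$. Write $\bar{e}_g\colon S^2\times\Omega^2_gX\to X$ for the evaluation at $g\in\{0,f\}$, so that $\sigma^2_g(x)=\bar{e}_g^*(x)/v$.

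The key geometric input is that for $(s,g)\in S^2\times\Omega^2_0X$ one has $\bar{e}_f\circ(1\times\bar{f})(s,g)=(g\cdot f)(s)$, where $\cdot$ is the $\Omega^2X$-concatenation defined through $\nabla$. This exhibits a homotopy commutative square with $1\times\bar{f}$ and $\bar{e}_f$ on one path and $\nabla\times 1$ and $\tilde\phi$ on the other, where $\tilde\phi\colon(S^2\vee S^2)\times\Omega^2_0X\to X$ is the map whose restrictions to the two copies of $S^2\times\Omega^2_0X$ are $\bar{e}_0$ and $f\circ\mathrm{pr}_1$, respectively.

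I will then compute $\tilde\phi^*(x)$ for $x\in H^n(X)$ with $n>0$. Via Künneth one has a decomposition $\tilde\phi^*(x)=1\times\gamma+u_1\times\alpha_1+u_2\times\alpha_2$, where $u_1,u_2$ are the generators in $H^2(S^2\vee S^2)$ pulled back from the two wedge summands. The two restriction identities, combined with $\bar{e}_0^*(x)=u\times\sigma^2_0(x)$ in positive degree (since the basepoint evaluation $\Omega^2_0X\to X$ is constant), force $\gamma=0$, $\alpha_1=\sigma^2_0(x)$, and $u\times\alpha_2=f^*(x)\times 1$. Since $\nabla^*(u_i)=u$, pulling back by $\nabla\times 1$ then yields
$$(1\times\bar{f})^*\bar{e}_f^*(x)=u\times\sigma^2_0(x)+f^*(x)\times 1.$$

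Finally, I take slant product with $v$ and use the naturality $((1\times\bar{f})^*y)/v=\bar{f}^*(y/v)$, obtaining $\bar{f}^*(\sigma^2_f(x))=\sigma^2_0(x)+(f^*(x)\times 1)/v$. Applying $(\bar{f}^*)^{-1}$ yields the stated formula, once we observe that $(f^*(x)\times 1)/v$ is a scalar multiple of the unit and hence fixed under the canonical identification of $H^0$-terms; the $n=0$ case is trivial as both sides vanish. The main obstacle is establishing the co-$H$-factorization identifying $\bar{e}_f\circ(1\times\bar{f})$ with $\tilde\phi\circ(\nabla\times 1)$; the remainder is a direct Künneth-and-slant calculation.
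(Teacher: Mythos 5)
Your proposal is correct and follows essentially the same route as the paper: both factor the evaluation on the $f$-component through the pinch map $S^2\to S^2\vee S^2$ and a map $(S^2\vee S^2)\times\Omega^2_0X\to X$ whose two restrictions are the basepoint-component evaluation and $f$, then read off the formula from the K\"unneth decomposition and the slant product with $v$. The only differences are cosmetic (you pull back along $1\times\bar f$ rather than $1\times\bar f^{-1}$, and you spell out the vanishing of the $1\times(\text{--})$ term, which the paper carries along and kills at the slant-product step).
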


\begin{proof}
Let $\phi \colon (S^2 \vee S^2) \times \Omega^2_0 X \rightarrow X $ be the map that $\phi (s,*,g)=f (s)$ and $\phi (*,t,g)=g(t)$, and $\eta\colon S^2 \rightarrow  S^2 \vee S^2$ be the folding map. Then there is a homotopy commutative diagram

$$\xymatrix{
  S^2\times\Omega^2_f X\ar[d]^{\bar{e}} \ar[rr]^{1 \times \bar{f}^{-1}} && S^2 \times \Omega^2_0X \ar[d]^{\eta \times 1 }  \\
  X && (S^2\vee S^2)\times \Omega^2_0 X \ar[ll]^{\phi}.
}$$

Thus 
\begin{align*}
\bar{e}^* (x)&=(1 \times (\bar{f}^*)^{-1})\circ \eta \circ \phi (x)\\
&=(1 \times (\bar{f}^*)^{-1})\circ \eta((f^*(x)\vee 1)\times 1 +(1 \vee u)\times \sigma^2_0(x)+1 \times j^*\circ e^*(x))\\
&=f^{*}(x)\times 1 +u \times (\bar{f}^*)^{-1}(\sigma^2_0(x)) +1 \times  (\bar{f}^*)^{-1}\circ j^*\circ e^*(x).
\end{align*}
The desired equality is obtained by taking the slant product with $v$.

\end{proof}

\section{Proof of the main theorem}

In this section, we prove the main theorem. 
Let $i_n \colon SU(n) \rightarrow SU(\infty)$ be the inclusion.

\begin{prop}\label{Bott}

$\quad$

\begin{enumerate}

\item There is an isomorphism $$H^*(\Omega SU(n)) \cong \Z[y_1, y_2, \dots ]/(s_n, s_{n+1}, \dots), \quad |y_i| = 2i.$$
\item The map $\Omega i_n \colon \Omega SU(n) \rightarrow \Omega SU(\infty)$ is a surjection in cohomology.

\end{enumerate}

\end{prop}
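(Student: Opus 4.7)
The plan is to combine Bott periodicity with the path-loop Serre spectral sequence, then close up with a Poincar\'e series comparison.

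By Bott periodicity, $\Omega SU(\infty) \simeq BU$, so $H^*(\Omega SU(\infty); \Z) \cong \Z[c_1, c_2, \ldots]$ with $|c_i|=2i$. Set $y_i := (\Omega i_n)^*(c_i) \in H^{2i}(\Omega SU(n); \Z)$, giving a graded ring homomorphism $\phi \colon \Z[y_1, y_2, \ldots] \to H^*(\Omega SU(n); \Z)$. For part (2), I would apply the Serre spectral sequence of the path-loop fibration $\Omega SU(n) \to PSU(n) \to SU(n)$, using $H^*(SU(n); \Z) = \Lambda_{\Z}(x_3, x_5, \ldots, x_{2n-1})$. Contractibility of $PSU(n)$ makes each exterior generator transgressive; naturality of the spectral sequence under $i_n \colon SU(n) \to SU(\infty)$ identifies, for $i = 1, \ldots, n-1$, the transgressive class in $H^{2i}(\Omega SU(n))$ corresponding to $x_{2i+1}$ with the pullback $y_i = (\Omega i_n)^*(c_i)$. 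Bott's Morse-theoretic cell decomposition then shows that $H^*(\Omega SU(n); \Z)$ is torsion-free and concentrated in even degrees, and is integrally generated by the transgressive classes together with their divided-power lifts. A Kudo-type transgression argument identifies those divided-power lifts with the higher pullbacks $(\Omega i_n)^*(c_i) = y_i$ for $i \geq n$, giving surjectivity of $\phi$ and hence (2).

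For part (1), the relations $s_i(y_1, \ldots, y_i) = 0$ for $i \geq n$ follow by rational reduction plus integral lifting. Rationally, $H^*(\Omega SU(n); \Q) \cong \Q[y_1, \ldots, y_{n-1}]$, and Newton's identities allow iterative elimination of $y_n, y_{n+1}, \ldots$ in $\Q[y_1, y_2, \ldots]/(s_n, s_{n+1}, \ldots)$, reducing it to $\Q[y_1, \ldots, y_{n-1}]$; to see that these relations actually hold in the image of $\phi_\Q$, one computes directly using the Chern character that $s_i(c) \in H^{2i}(BU; \Q)$ pulls back to zero on $\Omega SU(n) \simeq \Omega^2_0 BU(n)$ for $i \geq n$. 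Since $H^*(\Omega SU(n); \Z)$ is torsion-free, these relations lift integrally. A Poincar\'e series count---both the quotient $\Z[y_1, y_2, \ldots]/(s_n, s_{n+1}, \ldots)$ and $H^*(\Omega SU(n); \Z)$ have series $\prod_{i=1}^{n-1}(1-t^{2i})^{-1}$---combined with surjectivity of $\phi$ from (2), then forces $\phi$ to descend to the stated isomorphism.

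The main obstacle is the integral analysis: verifying torsion-freeness and even concentration of $H^*(\Omega SU(n); \Z)$ (classical via Bott's Morse theory), confirming the Newton polynomial relations hold rationally (via an explicit Chern character computation on $\Omega^2_0 BU(n)$), and matching the divided-power generators arising in the path-loop spectral sequence with the higher pullbacks $(\Omega i_n)^*(c_i)$ via Kudo transgression. Once all three are in hand, parts (1) and (2) follow together from the Poincar\'e series count.
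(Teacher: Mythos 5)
The paper's own proof is essentially a citation: part (1) is exactly Bott's Proposition 8.1 in [Bo], and part (2) follows in one line because Bott's isomorphism is natural in $n$, so the generators $y_i$ upstairs restrict to the generators $y_i$ downstairs. You instead attempt to reprove Bott's theorem from scratch, and the attempt has a genuine gap at its crux: surjectivity. The whole content of part (2) is that the pullbacks $(\Omega i_n)^*(c_i)$ generate $H^*(\Omega SU(n);\Z)$ \emph{integrally}, i.e.\ that they account for the divided powers (already for $n=2$, $\Omega SU(2)=\Omega S^3$ has $H^*\cong\Gamma[y_1]$, and one must see that $\gamma_k(y_1)=y_1^k/k!$ is the image of $c_k$). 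You bury this in the phrase ``a Kudo-type transgression argument identifies those divided-power lifts with the higher pullbacks.'' Kudo's theorem is a mod-$p$ statement relating the transgression of a $p$-th power $x^p$ to Steenrod operations on $\tau(x)$; it says nothing about integral divided powers and does not identify them with pullbacks of specific Chern classes. Knowing only that $H^*(\Omega SU(n);\Z)$ is even and torsion-free gives an embedding into $\Q[y_1,\dots,y_{n-1}]$, but the image of $H^*(BU;\Z)$ could a priori be a finite-index sublattice, and no Poincar\'e series count can rule that out. The honest route (Bott's) is through homology: $H_*(\Omega SU(n);\Z)\cong\Z[b_1,\dots,b_{n-1}]$ sits inside $H_*(BU;\Z)\cong\Z[b_1,b_2,\dots]$ as a degreewise direct summand, and surjectivity in cohomology is the dual statement; alternatively one can induct over the fibrations $\Omega SU(n)\to\Omega SU(n+1)\to\Omega S^{2n+1}$.

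There is a second, smaller gap in the endgame. Your Poincar\'e series comparison requires the abstract quotient $\Z[y_1,y_2,\dots]/(s_n,s_{n+1},\dots)$ to be a free $\Z$-module of the expected ranks, but the elimination of $y_i$ for $i\ge n$ via Newton's identities divides by $i$ and so only works over $\Q$; without torsion-freeness of the source, a surjection onto a free module with the same rational Poincar\'e series can still have torsion kernel. (Your rational verification of the relations $s_i=0$ for $i\ge n$ via the Chern character is fine --- it is essentially the paper's Lemma 3.2 --- and torsion-freeness of $H^*(\Omega SU(n))$ does lift those relations integrally; the problem is on the polynomial-algebra side.) Given how much of Bott's theorem you need as input anyway, the efficient course is the paper's: cite [Bo, Proposition 8.1] for (1) and observe that naturality of Bott's generators in $n$ gives (2) immediately.
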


\begin{proof}
$\quad$

\begin{enumerate}
\item This follows from the result of Bott \cite[Proposition 8.1]{Bo}.
\item 
By the construction of Bott \cite{Bo}, the isomorphism of (1) is natural with respect to the inclusion $i_n \colon \Omega SU(n) \rightarrow \Omega SU(n+1)$. Namely, $(i_n)^*(y_i)=y_i$ for each $i$.
\end{enumerate}

\end{proof}

We set notation. Let $\iota \in\widetilde{K}(S^2)$ be a generator such that $\ch(\iota)=u$, where $\ch\colon K(X)\to H^{**}(X;\Q)$ denotes the Chern character and $u$ is as in Section2.
Let $\xi_n \in \widetilde{K}(BU(n))$ be the universal bundle over $BU(n)$ minus the rank $n$ trivial bundle, and $\xi_{\infty} \coloneqq \text{colim}\xi_n$. We define $\beta \colon BU(\infty) \rightarrow \Omega^2_0BU(\infty)$ be the adjoint of $\iota \wedge \xi_\infty \colon S^2 \wedge BU(\infty) \rightarrow BU(\infty)$. 
Let $j_n \colon \map(S^2,BU(n);k) \rightarrow \map(S^2,BU(\infty);k)$ be the map induced by the inclusion $i_n\colon BU(n) \rightarrow BU(\infty)$.

\begin{lem}\label{sigma}

In $H^*(\Omega^2_0BU(\infty))$, 

\begin{equation*}
\sigma^2_0(c_m) = \begin{cases}	
0 & (m=1)\\
(-1)^{m-1} (\beta^{*})^{-1}(s_{m-1}(c_1, \dots c_{m-1})) & (m\geq 2).
\end{cases}
\end{equation*}

\end{lem}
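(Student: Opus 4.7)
The plan is to reduce the computation of $\sigma^2_0(c_m)$ to the calculation of the Chern classes of the $K$-theory element $\iota\cdot\xi_\infty\in\widetilde{K}(S^2\wedge BU(\infty))$, exploiting that $\beta^*$ is an isomorphism by Bott periodicity. The bridge is the square
$$\xymatrix{
S^2\times BU(\infty)\ar[r]^{1\times\beta}\ar[d]_{q} & S^2\times\Omega^2_0BU(\infty)\ar[d]^{\bar{e}} \\
S^2\wedge BU(\infty)\ar[r]^{\hat{\beta}} & BU(\infty),
}$$
which commutes by the very definition of the adjunction, where $q$ is the quotient and $\hat{\beta}=\iota\wedge\xi_\infty$. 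Because $e\circ j$ is constant at the basepoint, the formula of Proposition \ref{basic}(1) degenerates in positive degree to $\bar{e}^*(c_m)=u\times\sigma^2_0(c_m)$, so chasing $c_m$ around the square gives
$$u\times\beta^*(\sigma^2_0(c_m))=q^*(c_m(\iota\cdot\xi_\infty)).$$

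The remaining task is thus to compute $c_m(\iota\cdot\xi_\infty)$, and for this I would use the Chern character. Compatibility of $\ch$ with external products gives
$$\ch(\iota\cdot\xi_\infty)=u\otimes\ch(\xi_\infty)=u\otimes\sum_{j\geq 1}\frac{s_j(c_1,\dots,c_j)}{j!},$$
which a priori only constrains the Chern classes rationally. The key observation is that $u^2=0$ forces every product of positive-degree classes in $\widetilde H^*(S^2\wedge BU(\infty))$ to vanish, so if I write $c_k'\coloneqq c_k(\iota\cdot\xi_\infty)$ the Newton recursion collapses to the single surviving term $s_k(c_1',\dots,c_k')=(-1)^{k-1}kc_k'$. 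Matching the degree $2m$ components of the two expressions for $\ch(\iota\cdot\xi_\infty)$ then yields $c_m(\iota\cdot\xi_\infty)=(-1)^{m-1}u\otimes s_{m-1}(c_1,\dots,c_{m-1})$ for $m\geq 2$, while $c_1(\iota\cdot\xi_\infty)=0$ since $H^2(S^2\wedge BU(\infty))=0$. Combining with the previous display and inverting $\beta^*$ (which is an isomorphism by Bott periodicity) produces the stated formula.

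The main obstacle is precisely the integrality step just described: the Chern character is rational and in general does not determine integral Chern classes. What rescues the argument here is the degeneracy of the ring $\widetilde H^*(S^2\wedge BU(\infty))$ caused by $u^2=0$, which turns each Newton polynomial into an integer multiple of a single Chern class and thereby promotes the rational Chern-character identity to an integral computation of each $c_k'$.
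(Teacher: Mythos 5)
Your proof is correct and follows essentially the same route as the paper: both pull $c_m$ back along $1\times\beta$ to reduce to the class $\iota\wedge\xi_\infty$ over $S^2\wedge BU(\infty)$ and then exploit $\ch(\xi_\infty)=\sum_j s_j/j!$. The only cosmetic difference is that you kill the decomposable terms of the Newton identity via $u^2=0$ on the smash product (and spell out the resulting integrality of the rational Chern-character computation), whereas the paper kills them via the fact that $\sigma^2_0$ annihilates products; these are the same mechanism seen on the two sides of the adjunction.
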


\begin{proof}
There is  a homotopy commutative diagram
$$
\xymatrix{
S^2 \wedge BU(\infty) \ar[r]^{\quad \iota \wedge \xi_\infty }\ar[d]_{1 \times \beta} & BU(\infty) \\ 
S^2 \wedge \Omega^2_0BU(\infty) \ar[ru]_{\bar{e}} & \\
}
$$
where $\bar{e}$ is as in Section2.
Then it follows that
\begin{align*}
u\times \ch(\xi_\infty)&=\ch(\iota) \times \ch(\xi_\infty) =(\iota \wedge \xi_\infty)^*(\ch(\xi_\infty))\\
&=(1 \times \beta)^* \circ \bar{e}^*(\ch(\xi_\infty))= u \times \beta^*(\sigma^2_0(ch(\xi_\infty))).
\end{align*}
in the rational cohomology. Thus since $\ch(\xi_\infty)=\sum_{i\geq1}\frac{1}{i!}s_i$,
$$u\times s_m = u \times \beta^{*}(\sigma^2_0((-1)^{m}c_{m+1}))$$  in the integral cohomology. Completing the proof.

\end{proof}

Let $\gamma_k \colon \Omega^2_0BU(n) \rightarrow \Omega^2_kBU(n)$ be the map given by the concatenation with the degree $k$ map $S^2 \rightarrow BU(n)$. Then $\gamma_k$ is a homotopy equivalence.

\noindent\textit{Proof of Theorem \ref{main}}

First, we define $$\bar{x}_m \coloneqq \alpha^* \circ {({\gamma_k}^*)}^{-1} \circ  {(\beta^*)}^{-1} (c_m) \quad\text{and}\quad x_m \coloneqq j_n^* (\bar{x}_m),$$ where the map $\alpha \colon \map(S^2,BU(n);k) \rightarrow \Omega^2_kBU(n)$ is as in Section 2. We show these $x_i$ become the Chern classes of a virtual bundle in the latter half of this proof.

There is a homotopy commutative diagram 
$$
\xymatrix{
\Omega^2_0BU(n) \ar[r]^{\Omega^2_0i_n} \ar[d]^{\gamma_k} & \Omega^2_0BU(\infty) \ar[d]^{\gamma_k} \\ 
\Omega^2_kBU(n) \ar[r]^{\Omega^2_ki_n} & \Omega^2_kBU(\infty) 
}
$$
in which vertex maps are homotopy equivalence.
Then by Lemma \ref{Bott} (2), $\Omega^2_k i_n$ is surjective in cohomology.
Thus since $H^{*}(\Omega^2_kBU(\infty))$ is generated by $ ({\gamma^*_k})^{-1} \circ  {(\beta^*)}^{-1} (c_i)$ for $i\geq 1$, $H^{*}(\Omega^2_kBU(n)) $ is generated by  $(\Omega_k^2i_n)^* \circ ({\gamma^*_k})^{-1} \circ  {(\beta^*)}^{-1} (c_i)$ for $i\geq 1$. 
There is a homotopy commutative diagram

$$
\xymatrix{
\Omega^2_kBU(n) \ar[r]^{\Omega^2_k i_n} \ar[d]   & \Omega^2_kBU(\infty) \ar@{=}[r]\ar[d] & \Omega^2_kBU(\infty)  \ar@{=}[d]  \\ 
\map (S^2, BU(n);k) \ar[r]^{j_n}&\map(S^2, BU(\infty);k)  \ar[r]^{\quad \alpha}  &\Omega^2_kBU(\infty).
}
$$
Then $x_i$ restricts to $(\Omega_k^2i_n)^* \circ ({\gamma^*_k})^{-1} \circ  {(\beta^*)}^{-1} (c_i)$.
Now we apply the Leray-Hirsch theorem to the evaluation fibration $\Omega^2_k(BU(n)) \xrightarrow{j} \map(S^2, BU(n);k) \xrightarrow{e} BU(n),$ we obtain that
$$\Phi \colon \Z[c_1, c_2, \dots c_n, x_1, x_2, \dots] \rightarrow H^*(\map(S^2, BU(n);k))$$
is surjective.

We next show $h_i \in \text{Ker}(\Phi)$. By Proposition \ref{basic} (3), Lemma \ref{shift} and Lemma \ref{sigma}, in $H^*(\map(S^2,BU(\infty);k))$ 
\begin{align*}
\fsus_k(c_i) & = k c_{i-1} + \sum_{2\leq j \leq i}\alpha^* \circ {(\gamma_k^*)}^{-1}(\sigma^2_0(c_j))c_{i-j} \\
& = kc_{i-1} + \sum_{2\leq j \leq i}(-1)^{j-1} s_{j-1}(\bar{x}_1, \bar{x}_2, \dots) c_{i-j}.
\end{align*}
Then for $i\geq n$
$$\Phi(h_i)=j_n^{*}(\fsus_k(c_{i+1})) =\fsus_k(i_n^{*}(c_{i+1}))=0$$
where $j_n^*(\bar{x}_i)=x_i$.
Thus $\Phi$ induces a surjection
$$\bar{\Phi} \colon \Z[c_1, c_2, \dots c_n, x_1, x_2, \dots]/(h_n, h_{n+1}, \dots) \rightarrow H^*(\map(S^2, BU(n);k)).$$

We next show that $\bar{\Phi}$ is an isomorphism.
Let $\F$ be an arbitrary field. We calculate the Poincar\'e series of $A_{\F} \coloneqq \F[c_1, c_2, \dots c_n, x_1, x_2, \dots]/(h_n, h_{n+1}, \dots).$ 
Let $P_t(V)$ be the Poincar\'e series of a graded vector space $V$.
Since $h_i \equiv s_i(x_1, x_2, \dots x_i) \quad \text{mod} (c_1, c_2, \dots c_n)$ and $c_1, c_2, \dots c_n$ is a regular sequence in $A_\F$,
$$P_t(\F[c_1, c_2, \dots c_n, x_1, x_2, \dots]/(h_n, h_{n+1}, \dots))=\frac{P_t(\F[x_1, x_2, \dots]/(s_n, s_{n+1} \dots))}{(1-t^2)(1-t^4)\dots(1-t^{2n})}.$$
Then by Lemma \ref{Bott},
$$P_t(A_\F)=\frac{P_t(H^{*}(\Omega SU(n);\F))}{(1-t^2)(1-t^4)\dots(1-t^{2n})}.$$
On the other hand as in Section 1 the Serre spectral sequence of the evaluation fibration $\Omega^2_k(BU(n)) \rightarrow \map(S^2, BU(n);k) \xrightarrow{e} BU(n)$ collapses at the $E_2$-term, and so 
\begin{align*}
P_t(H^*(\map(S^2, BU(n);k);\F))&=P_t(H^{*}(BU(n);\F))\times P_t(H^{*}(\Omega SU(n);\F))\\
&=\frac{ P_t(H^{*}(\Omega SU(n);\F))}{(1-t^2)(1-t^4)\dots(1-t^{2n})}.
\end{align*}
Then we get the equality $$P_t(A_\F)=P_t(H^*(\map(S^2, BU(n);k);\F)).$$
Since the source and target of $\bar{\Phi}$ is of finite type, $\bar{\Phi}$ is an isomorphism over an arbitrary field.
Thus $\bar{\Phi}$ is an isomorphism over $\Z$.


 It remains to show that $x_i$ can be represented as the Chern class of a vertual bundle. 
Since the K\"{u}nneth formula holds as
$$K(S^2\times \map(S^2,BU(\infty);k))\cong K(S^2)\otimes K(\map(S^2,BU(\infty);k)),$$
we can define the $K$-theoretic free double suspension $\fsus_k$. 
We define
$$\hat{\sigma}^2_{f}\colon K(BU(\infty))\to K(\map(S^2,BU(\infty);f))\quad\text{by}\quad\hat{e}^*(x)=1\otimes e^{*}(x)+\iota\otimes\hat{\sigma}^2_{f}(x)$$
for $x\in K(BU(\infty))$. 
By the same argument as in the first half of the proof of this theorem, 
\begin{align*}
\fsus_{k}(\xi_\infty)=k +\alpha^* \circ (\gamma_k^{*})^{-1}\circ(\beta^*)^{-1}(\xi_\infty).
\end{align*}
If we put $\zeta_\infty\coloneqq \fsus_{k}(\xi_\infty)$, then $c_i(\zeta_\infty)=\bar{x}_i$ for $i\geq 1$. Let $\zeta_n \coloneqq j_n^*(\zeta_\infty)$
then $c_i(\zeta_n)=j_n^*(c_i(\zeta_\infty))=x_i$ as desired. 
Therefore the proof is complete.

\begin{flushright}
$\square$
\end{flushright}

\section*{Acknowledgement}

The author is deeply grateful to Daisuke Kishimoto for much valuable advice.

\end{document}